\newcommand{\diesis}{^\#}
\newtheorem{theo}{Theorem}[section]
\newtheorem{lemma}{Lemma}[section]
\newtheorem{prop}{Proposition}[section]
\theoremstyle{definition}
\newtheorem{definiz}{Definition}[section]
\newtheorem{rem}{Remark}[section]
\numberwithin{equation}{section}
\newcommand{\R}{\mathbb R}
\newcommand{\de}{\partial}
\newcommand{\aste}{\text{\ding{86}}}
\newcommand{\stella}{\text{\ding{73}}}
\newcommand{\conv}{\text{\ding{71}}}
\newcommand{\ds}{\displaystyle}
\DeclareMathOperator{\divergenza}{div}
\DeclareMathOperator{\dist}{dist}
\DeclareMathOperator{\Cof}{Cof}
\begin{document}
\title[Symmetrization with respect to the anisotropic perimeter]{Symmetrization with respect to the anisotropic perimeter and applications}
\author[F. Della Pietra, N. Gavitone]{
  Francesco Della Pietra$^{*}$ and Nunzia Gavitone$^{*}$
}
\date{}
\thanks{$^{*}$Universit\`a degli studi di Napoli Federico II,
Dipartimento di Ma\-te\-ma\-ti\-ca e Applicazioni ``R. Caccioppoli'',
Complesso di Monte Sant'Angelo, Via Cintia,
80126 Napoli, Italia
Fax:+390817662106. \\
Email: f.dellapietra@unina.it, nunzia.gavitone@unina.it}
\maketitle

\begin{abstract}
In this paper we introduce a new type of symmetrization, which preserves the anisotropic perimeter of the level sets of a suitable concave smooth function, in order to prove sharp comparison results for solutions of a class of homogeneous Dirichlet fully nonlinear elliptic problems of second order and for suitable anisotropic Hessian integrals.\\

\noindent{\it Key words:} Fully nonlinear elliptic equations, P\'olya-Sz\"ego 
inequality, Wulff shape, anisotropic symmetrization\\[.2cm]
\noindent{2010 Mathematics Subject Classification}: 35J25, 35A23

\end{abstract}

\section{Introduction}

Let $\Omega$ be a bounded, strictly convex domain of $\R^{2}$. In a nowadays classical paper by Talenti \cite{ta81}, it was proved that if $u$ is a $C^{2}$ concave solution of the Monge-Amp\`ere equation
\begin{equation}
\label{euclideo}
\left\{
\begin{array}{ll}
	\det \nabla^{2} u =f(x) &\text{in }\Omega,\\
	u=0&\text{on }\de\Omega,
\end{array}
\right.
\end{equation}
with $f$ positive and sufficiently smooth function, then 
\begin{equation}
\label{talenti}
\tilde u(x) \le v(x), \quad x\in D,
\end{equation}
where $v$ is the positive concave solution of the problem
\begin{equation}
\label{euclideorad}
\left\{
\begin{array}{ll}
	\det \nabla^{2} v =f\diesis(x) &\text{in }D,\\
	v=0&\text{on }\de D.
\end{array}
\right.
\end{equation}
Here $D$ is the disk centered at the origin with the same perimeter of $\Omega$, $f\diesis$ is the spherically decreasing rearrangement of $f$, and $\tilde u$ is the spherically decreasing function in $D$ whose level sets have the same perimeter of the level sets of $u$. Hence, among all the problems of the type \eqref{euclideo} with prescribed perimeter of $\Omega$ and fixed rearrangement of $f$, problem \eqref{euclideorad} gives the ``maximal'' solution.
After this result, generalizations in several directions have been studied (see for example \cite{br,bntpoin,bt07,bt07bis,dpg2,dpg3,ga09,tr2,tso}). Moreover, it was proved in \cite{tr2}, \cite{tso} that a P\'olya-Szeg\"o type inequality for the Monge-Amp\`ere operator holds, namely
\begin{equation}
\label{poleucl}
\int_{\Omega} u\det D^{2} u\, dx \ge \int_{D} \tilde u \det D^{2} \tilde u\, dx.
\end{equation}
Hence, the symmetrization with respect to the perimeter decreases the Hessian integral. 

In this paper we take into account a class of fully nonlinear elliptic  anisotropic operators of the second order, which contains the classical  Monge-Amp\`ere operator. More precisely, we consider a sufficiently smooth norm $H$ of $\R^2$, and denote with $H^o$ its polar function (see Section 2 for the precise assumptions). If $u$ a smooth concave function, we take into account the following anisotropic Monge-Amp\`ere operator
\begin{equation}
\label{detintr}
\vspace{.1cm}
{\det}_{H}[u]:=\det \nabla^{2}_{\xi} F \,\det \nabla^{2} u,
\vspace{.1cm}
\end{equation}
where $F(\xi)=\frac 1 2 {H(\xi)^{2}}$, $\xi \in \R^{2}$, and $\nabla^{2}_{\xi}F(\xi)$ is the $2\times 2$ matrix of the second derivatives of $F$. We observe that when $H$ is  the Euclidean norm of $\R^{2}$, then the matrix $\nabla^{2}_{\xi} F$ reduces to the identity, and \eqref{detintr} reduces with the classical Monge-Amp\`ere operator.

Several kind of problems related to anisotropic operators have been largely studied in last years. We refer the reader, for example, to \cite{aflt,atw,and,bp,bfk,bf,ciasal,cfv,dpg4,dpg5,et,fvol,ja,wxarma}.

The aim of this paper is to prove a suitable generalization of the inequalities \eqref{talenti} and \eqref{poleucl} for the operator \eqref{detintr} using a new type of symmetrization. More precisely, given a concave smooth function $u$, the rearranged function we introduce preserves the anisotropic perimeter of its level sets, that is a suitable measure of the length of $\{u=t\}$ which takes into account the anisotropy $H$.  For this purpose a fundamental tool is a well-known anisotropic isoperimetric inequality (see for instance \cite{bu,aflt,fomu,dpf,dpg1}). Moreover, a key role is played by a relation between the operator \eqref{detintr} and the anisotropic curvature of the level sets of $u$ (see Sections 2 and 3).

The structure of the paper is the following. In Section 2, we state the main hypotheses on the norm $H$, and we recall the definition of anisotropic perimeter and curvature, proving a version of the Gauss-Bonnet Theorem in this setting. In Section 3, we introduce the anisotropic Monge-Amp\`ere operator, proving its connection with the anisotropic curvature. Moreover, we compute the operator \eqref{detintr} for functions which are symmetric with respect to $H^{o}$. 

The Section 4 is devoted to define the symmetrization with respect to the anisotropic perimeter and to prove the P\'olya-Szeg\"o inequality. Finally, in Section 5 we prove the quoted comparison result in the spirit of Talenti's inequality.

%
%
%

\section{Notation and preliminaries}
Throughout the paper we will denote by $H:\R^2\rightarrow [0,+\infty[$, a convex function such that
\begin{equation}\label{eq:lin}
  \alpha|\xi| \le H(\xi),\quad \forall \xi\in \R^2,
\end{equation}
and
\begin{equation}\label{eq:omo}
  H(t\xi)= |t| H(\xi), \quad \forall \xi \in \R^2,\; \forall t \in
  \R.
\end{equation}
Under these hypotheses it is easy to see that there exists $\beta\ge \alpha$ such that
\begin{equation}
\label{betabound}
  H(\xi) \le \beta|\xi|,\quad \forall \xi\in \R^2.
\end{equation}
Moreover, we assume that $H^{2}$ is strongly convex, that is $H\in C^{2}(\R^{2}\setminus\{0\})$ and the Hessian matrix $\nabla^{2}_{\xi} H^{2}$ is positive definite in $\R^{2}\setminus\{0\}$. 

The polar function $H^o\colon\R^2 \rightarrow [0,+\infty[$  of $H$ is defined as
\begin{equation}
  \label{eq:pol}
H^o(v)=\sup_{\xi \ne 0} \frac{\xi\cdot v}{H(\xi)}  
\end{equation}
where $\langle\cdot,\cdot\rangle$ is the usual scalar product of
$\R^2$. It is easy to verify that also $H^o$ is a convex function
which satisfies properties \eqref{eq:lin} and \eqref{eq:omo}. Furthermore, 
\[
H(v)=\sup_{\xi \ne 0} \frac{\xi\cdot v}{H^o(\xi)}.
\]
The set
\[
\mathcal W = \{  \xi \in \R^n \colon H^o(\xi)< 1 \}
\]
is the so-called Wulff shape centered at the origin. We put
$\kappa=|\mathcal W|$, where $|\mathcal W|$ denotes the Lebesgue measure
of $\mathcal W$. 
More generally, we denote with $\mathcal W_r(x_0)$
the set $r\mathcal W+x_0$, that is the Wulff shape centered at $x_0$
with measure $\kappa r^2$, and $\mathcal W_r(0)=\mathcal W_r$.

The strong convexity of $H^{2}$ implies that $\{\xi\in\R^{2}\colon H(\xi)< 1\}$ is strictly convex. This ensures that $H^{o}\in C^{1}(\R^{2}\setminus\{0\})$. Actually, the strict convexity of the level sets of $H$ is equivalent to the continuous differentiability of  $H^{o}$ in $\R^{2}\setminus\{0\}$  (see \cite{schn} for the details).

The following properties of $H$ and $H^o$ hold true
(see for example \cite{bp}):
\begin{gather}
H(\xi)=H_{\xi}(\xi)\cdot \xi,\quad H^{o}(\xi)=H^{o}_{\xi}(\xi)\cdot \xi,\quad \forall \xi \in
\R^2\setminus \{0\}\label{eul} \\
 H( H_{\xi}^o(\xi))=H^o(H_{\xi}(\xi))=1,\quad \forall \xi \in
\R^2\setminus \{0\}, \label{eq:H1} \\
H^o(\xi)  H_{\xi}( H_{\xi}^o(\xi) ) = H(\xi) 
H_{\xi}^o( H_{\xi}(\xi) ) = \xi,\quad \forall \xi \in
\R^2\setminus \{0\}. \label{eq:HH0}
\end{gather}
\subsection{Anisotropic perimeter}
Let $K$ be an  open bounded set of $\mathbb R^n$ with Lipschitz boundary. The
  perimeter of $K$ is defined as the quantity
\[
P_H(K) = \int_{ \partial K} H(\nu_K) d\mathcal H^1,
\]
where $\nu_{K}$ is the Euclidean outer normal to $\de K$. 
For example, if $K=\mathcal W_{R}$, then
\begin{multline*}
P_{H}(\mathcal W_{R})=\int_{\de \mathcal W_{R}} \frac{1}{|\nabla H^{o}(x)|} d\mathcal H^{1}= \frac 1 R \int_{\de \mathcal W_{R}} \frac{x\cdot \nabla H^{o}(x)}{|\nabla H^{o}(x)|} d\mathcal H^{1} = \\= 
\frac 1 R \int_{\de \mathcal W_{R}} x\cdot \nu\, d\mathcal H^{1} =
\frac 2 R \int_{\mathcal W_{R}} dx = 2\kappa R,
\end{multline*}
where in the above computations we used \eqref{eul} and the divergence theorem.

The anisotropic perimeter of a set $K$ is finite if and only if the usual Euclidean perimeter $P(K)$ is finite. Indeed, by properties \eqref{eq:omo} and \eqref{eq:lin} we have that
\[
\frac{1}{\beta} |\xi| \le H^o(\xi) \le \frac{1}{\alpha} |\xi|,
\]
and then
\begin{equation*}\label{eq:per}
\alpha P(K) \le P_H(K) \le \beta P(K).
\end{equation*}

An isoperimetric inequality for the anisotropic perimeter holds,
namely
\begin{equation}
  \label{isop}
  P_H(K)^{2} \ge 4\kappa |K|
\end{equation}
(see for example
\cite{bu,dpf,fomu,aflt}). We stress that in
\cite{dpg1} an isoperimetric inequality for the
anisotropic relative perimeter in the plane is studied. 

Moreover, if $K$ is a convex body of $\R^{2}$, and $\delta>0$, the following Steiner formulas hold (see \cite{and,schn}):
\begin{equation}
  \notag
|K+\delta \mathcal W|= |K| + P_H(K) \delta + \kappa \delta^2
\end{equation}
and
\begin{equation}
\label{st2}
P_H(K+\delta \mathcal W) = P_H(K) +
2 \kappa \delta.
\end{equation}

\subsection{Anisotropic curvature}
For the sake of simplicity, will assume the following conventional notation: given a smooth function $u$, then $\de_{x_{i}}u=u_{i}$, for $i=1,2$, and $\de_{x_i x_j}u =u_{ij}$, for $i,j=1,2$.

We recall the definition and some properties of anisotropic curvature for a smooth set. For further details we refer the reader, for example, to \cite{atw} and \cite{bp}.
\begin{definiz}
  Let
  $K\subset \R^2$ be a bounded open set with smooth boundary,
  $\nu_K(x)$ the unit outer normal at $x\in \partial K$, in the
  usual Euclidean sense. Let $u$ be a $C^2(K)$ function such that
  $=\{u>0\}$, $\de K = \{ u=0 \}$ and $\nabla u \ne (0,0)$ on $\de
  K$. Hence, $\nu_K=-\frac{\nabla u}{|\nabla u|}$ on $\de K$.
  The anisotropic outer normal $n_{K}$ is defined as
  \[
  n_K(x)= \nabla_{\xi} H(\nu_K(x))=\nabla H_{\xi}\left(-\frac{\nabla u}{|\nabla u|}\right),\quad x\in \partial K.
  \]
  By the properties of $H$ and $H^{o}$,
  \[
  H^o(n_K)=1.
  \]
  The anisotropic curvature $k_H$ of $\partial K$ is
  \[
  k_H(x)= \divergenza n_K(x)=\divergenza\left[ \nabla_{\xi}
    H\left(-\frac{\nabla u}{|\nabla u|}\right) \right], \quad x\in
  \de K.
  \]
  Being $\nabla_{\xi}H$ a $0$-homogeneous function, it follows that
  \begin{equation}
  \label{curvform2}
  k_{H}(x)=-\divergenza \big[ \nabla_{\xi} H(\nabla u)\big]= -\sum_{i,j} H_{\xi_{i}\xi_{j}}(\nabla u)\,u_{ij}\quad\text{on }\de K.  	
  \end{equation}
\end{definiz}
\begin{rem}
If $H(x)=(x_{1}^{2}+x_{2}^{2})^{1/2}$ is the Euclidean norm of $\R^{2}$, then the above definition coincides with the classical definition of curvature in the plane.
\end{rem}
\begin{rem}
We stress that if $K=\mathcal W_{\lambda}(x_{0})=\{x\in \R^{2}\colon H^o(x-x_0)<\lambda\}$, that is homothetic to the Wulff shape $\mathcal W$ and centered at $x_0\in \R^{2}$, the anisotropic outer normal at $x\in \de K$ has the direction of $x-x_0$. Indeed by the properties of $H$ it follows that
\[
n_K(x)= \nabla_\xi H\big(\nabla_{\xi} H^o(x-x_0) \big) =
\frac{1}{\lambda}(x-x_0),\quad x\in \de K
\]
(see Figure \ref{fig:normal} for an example). Moreover, computing the anisotropic curvature at $x\in \partial K$ we have that
\[
k_H(x)= \frac 1 \lambda.
\]
\end{rem}
\begin{center}
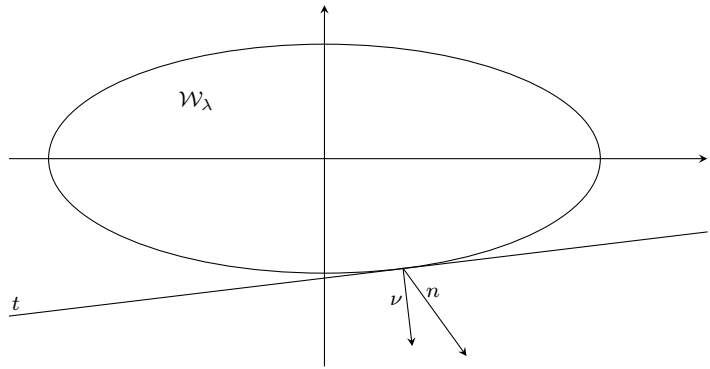
\begin{figure}[h]
\begin{tikzpicture}[line cap=round,line join=round,>=triangle
  45,x=1.0cm,y=1.0cm,>=stealth,scale=1.1]
\draw[->,color=black] (-3.77,0) -- (4.58,0);
\draw[->,color=black] (0,-2.5) -- (0,1.85);
\draw [rotate around={0:(0,0)}] (0,0) ellipse (3.3cm and 1.38cm);
\draw [domain=-3.77:4.58] plot(\x,{(-3.08--0.26*\x)/2.14});
\draw [->] (0.94,-1.32) -- (1.05,-2.26);
\draw [->] (0.94,-1.32) -- (1.7,-2.38);
\draw[color=black] (-1.53,.7) node {\scriptsize $\mathcal W_{\lambda}$};
\draw[color=black] (-3.69,-1.74) node {\scriptsize $t$};
\draw[color=black] (.86,-1.71) node {\scriptsize $\nu$};
\draw[color=black] (1.3,-1.61) node {\scriptsize $n$};
\end{tikzpicture}
\caption{ Here $H(x_{1},x_{2})=(
  {x_{1}^2}/{a^2}+ {x_{2}^2}/{b^2})^{1/2}$ and  $H^o(x_{1},x_{2})=(
  {a^2}{x_{1}^2}+ {b^2}{x_{2}^2})^{1/2}$. When $a\ne b$, the usual and the
  anisotropic outer normal are, in general, different.}\label{fig:normal}
\end{figure}
\end{center}
The anisotropic curvature and the anisotropic perimeter are related as follows. By computing the first variation of the perimeter (see \cite[Theorem 5.1]{bp}, or \cite[Section 2.6, formula (2.24)]{and})
we have that if $K$ has smooth boundary
\[
 \lim_{\delta\rightarrow 0^+}\frac{P_H(K+\delta \mathcal W)-P_H(K)}{\delta} = \int_{\de \Omega} k_{H}(x) H(\nu)d\mathcal H^{1}.
 \]
Combining this identity and formula \eqref{st2}, we obtain the following anisotropic version of the Gauss-Bonnet theorem for boundaries of smooth convex sets.
\begin{prop}
If $K$ is a convex, bounded open set such that $\de K\in C^{2}$, then
\begin{equation}
\label{gaussbonnet}
\int_{\de K} k_{H}(x)\,H(\nu)\,d\mathcal H^{1}= 2\kappa.
\end{equation}
\end{prop}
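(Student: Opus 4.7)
The plan is to evaluate the one-sided derivative
\[
\lim_{\delta\to 0^+}\frac{P_H(K+\delta\mathcal W)-P_H(K)}{\delta}
\]
in two different ways and equate the two expressions. This is really a one-line argument, because both ingredients are already in place in the excerpt: the Steiner-type identity \eqref{st2} and the first variation formula for the anisotropic perimeter stated immediately before the proposition.

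First, by \eqref{st2} the map $\delta\mapsto P_H(K+\delta\mathcal W)$ is affine in $\delta$ with slope $2\kappa$, so the above limit equals $2\kappa$ exactly, with no higher-order remainder. Second, the first variation formula recalled just above the statement identifies the same limit with $\int_{\de K}k_H(x)\,H(\nu)\,d\mathcal H^1$. Matching the two values yields \eqref{gaussbonnet}.

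The only point worth checking is that the regularity assumptions allow both tools to be applied simultaneously. Convexity of $K$ together with $\de K\in C^{2}$ guarantees that for every small $\delta>0$ the Minkowski sum $K+\delta\mathcal W$ is still a convex body with $C^{2}$ boundary, so \eqref{st2} is legitimate in its sharp form; meanwhile, $C^{2}$ regularity of $\de K$ ensures that $k_H$ is continuous on $\de K$, so the first variation identity produces no boundary correction terms. There is therefore no genuine obstacle: the proposition is essentially a repackaging of the Steiner formula and the first variation formula, and the main content of the argument lies in those two prior results rather than in the passage from them to \eqref{gaussbonnet}.
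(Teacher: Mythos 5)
Your argument is exactly the one the paper uses: the proposition is derived by equating the first-variation formula for the anisotropic perimeter with the Steiner identity \eqref{st2}, both of which are quoted immediately before the statement. The additional remarks on regularity (that $K+\delta\mathcal W$ remains a convex $C^2$ body and that $k_H$ is continuous on $\de K$) are sensible sanity checks but do not change the substance of the proof.
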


\section{Anisotropic Monge-Amp\`ere operator}
Given any $\xi=(\xi_{1},\xi_{2})\in \R^{2}$, we denote by $F(\xi)$ the function 
\[
F(\xi)=\frac 1 2 {H(\xi)^{2}},
\]
and by $\nabla^{2}_{\xi}F(\xi)$ the $2\times 2$ matrix of the second derivatives of $F$. Hence its components are
\[
F_{\xi_{i}\xi_{j}} =
H_{\xi_{i}}H_{\xi_{j}}+
H H_{\xi_{i}\xi_{j}},\quad\text{for }i,j=1,2.
\]
Let $u$ be a smooth function, and consider the fully nonlinear operator $u \mapsto A[u]=A(\nabla u, \nabla^{2}u)$ defined as
\[
A[u] = \nabla_{x} \big[ F_{\xi}(\nabla u) \big]= \nabla^{2}_{\xi}F(\nabla u)\, \nabla^{2}u.
\]
We will take into account equations whose principal part is the following:
\[
{\det}_{H}[u]:=\det A[u]=\det \nabla^{2}_{\xi} F \,\det \nabla^{2} u.
\]
\begin{rem}
	We point out that if $H(x)=(x_{1}^{2}+x_{2}^{2})^{1/2}$ is the Euclidean norm of $\R^{2}$, then the matrix $\nabla^{2}_{\xi} F$ reduces to the identity, and the operator $\det A[u]$ coincides with the classical Monge-Amp\`ere operator.
\end{rem}
We will consider a convex, bounded, open set $\Omega\in \R^{2}$ with $C^{2}$ boundary, and functions belonging to the class 
\[
\Phi_{0}(\Omega)\colon= \left\{u\colon\Omega\to \R\, \big|\, u\in W^{2,2}(\Omega)\cap C^1(\bar \Omega),\, u \equiv 0 \text{ on }\de \Omega, u
\text{ concave in }\Omega
 \right\}.
\]
Being $F$ strongly convex, the functions $u\in \Phi_{0}(\Omega)$ are admissible in order to have that $\det_{H}$ is elliptic. Obviously, $u\in \Phi_{0}(\Omega)$ is either positive in $\Omega$, or $v\equiv 0$ in $\bar \Omega$.

Let us denote by 
\[
(S^{ij}(B))_{ij}= \Cof B =
\begin{pmatrix}
b_{22}& -b_{21}\\
-b_{12}& b_{11}
\end{pmatrix}
\] 
the cofactor of the matrix $B=(b_{ij})$. Observe that
\[
\sum_{i,j}S^{ij}(A[u])F_{\xi_{i}}u_{j} = 
\nabla_{\xi} F
 \Cof A[u]
\cdot \nabla u.
 \]

In \cite{ciasal} the following integration by parts formula is proved.

\begin{lemma}
\label{ciasaprop}
Let $u\in W^{2,2}(\Omega)\cap C^{1}(\bar \Omega)$, with $\Omega$ bounded open set such that $\de \Omega\in C^{1}$, and $u=0$ on $\de \Omega$. Then 
\begin{equation}
\label{ciasa}
\int_{\Omega} u\, {\det}_{H}[u]\,dx =-\frac 1 2 \sum_{i,j}\int_{\Omega} S^{ij}(A[u])\, F_{\xi_{i}}(\nabla u)\,u_{j}\,dx.
\end{equation}
\end{lemma}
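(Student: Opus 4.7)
The plan is to recognize $2\,{\det}_H[u]$ as a pure Euclidean divergence and then integrate by parts against $u$, using the Dirichlet condition to kill the boundary contribution. The decisive observation is that
\[
A[u]_{ij}=\sum_k F_{\xi_i\xi_k}(\nabla u)\,u_{kj}=\partial_j\bigl(F_{\xi_i}(\nabla u)\bigr),
\]
so the matrix $A[u]$ is precisely the Jacobian of the vector field $\Phi(x)=F_\xi(\nabla u(x))$.

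Assuming first that $u$ is smooth, I would invoke two classical $2\times 2$ facts. The Piola identity (an immediate consequence of the equality of mixed partial derivatives) gives the row-wise divergence-free property of the cofactor,
\[
\sum_{j=1}^{2}\partial_j S^{ij}(A[u])=0\qquad(i=1,2),
\]
while the elementary algebraic identity $\sum_{i,j}S^{ij}(B)\,b_{ij}=2\det B$, applied to $B=A[u]$, reads $\sum_{i,j}S^{ij}(A[u])\,\partial_j F_{\xi_i}(\nabla u)=2\,{\det}_H[u]$. Combining the two produces the divergence form
\[
2\,{\det}_H[u]=\sum_{j}\partial_j\!\left(\sum_i S^{ij}(A[u])\,F_{\xi_i}(\nabla u)\right).
\]
Multiplying by $u$, integrating on $\Omega$ and applying the divergence theorem yields
\[
2\int_\Omega u\,{\det}_H[u]\,dx=-\sum_{i,j}\int_\Omega S^{ij}(A[u])\,F_{\xi_i}(\nabla u)\,u_j\,dx+\int_{\partial\Omega}u\,(V\cdot\nu)\,d\mathcal H^{1},
\]
with $V^{j}:=\sum_i S^{ij}(A[u])F_{\xi_i}(\nabla u)$; the boundary integral vanishes because $u=0$ on $\partial\Omega$, and dividing by two gives \eqref{ciasa}.

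The main obstacle is the low regularity $u\in W^{2,2}(\Omega)\cap C^{1}(\bar\Omega)$, under which the Piola identity and the divergence theorem are not pointwise facts. I would overcome this by exhausting $\Omega$ with the open super-level sets $\Omega_\delta:=\{u>\delta\}$, which have $C^{2}$ boundary for a.e.\ $\delta>0$ by Sard's theorem and on which $u\equiv\delta$ furnishes an internal Dirichlet datum. The spurious boundary contribution is then $\delta\int_{\partial\Omega_\delta}V\cdot\nu=2\delta\int_{\Omega_\delta}{\det}_H[u]\,dx$, which tends to $0$ as $\delta\to 0^{+}$ because ${\det}_H[u]\in L^{1}(\Omega)$ (the entries of $\nabla^{2}_\xi F$ are $0$-homogeneous and bounded away from the origin, so $A[u]$ has $L^{2}$ entries and its determinant is in $L^{1}$). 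The Piola identity itself extends to the Jacobian of the $W^{1,2}$ map $F_\xi(\nabla u)$ in the distributional sense, so the divergence identity is valid in $\mathcal D'(\Omega)$; the remaining volume integrals pass to the limit by monotone convergence (since $u$ and ${\det}_H[u]$ are both non-negative under the concavity assumption) and by dominated convergence, respectively, completing the argument.
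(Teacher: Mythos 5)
Your divergence-structure argument is the right one and the computation is sound: recognizing $A[u]$ as the Jacobian of $F_\xi(\nabla u)$, combining the Piola identity $\sum_j\partial_j S^{ij}(A[u])=0$ with the contraction $\sum_{i,j}S^{ij}(B)\,b_{ij}=2\det B$ to write $2\,{\det}_H[u]=\divergenza V$ with $V^j=\sum_i S^{ij}(A[u])\,F_{\xi_i}(\nabla u)$, and then integrating by parts against $u$. The paper itself does not prove Lemma~\ref{ciasaprop}; it is quoted from \cite{ciasal}, so there is no internal proof to match against, but your formal identity is the standard mechanism behind such Gauss--Green formulas.

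The regularization step, however, contains genuine gaps. The lemma is stated for an arbitrary $u\in W^{2,2}(\Omega)\cap C^1(\bar\Omega)$ with zero trace; no sign or concavity is assumed. Consequently the exhaustion by super-level sets $\{u>\delta\}$ and your closing appeal to monotone convergence ``since $u$ and ${\det}_H[u]$ are non-negative under the concavity assumption'' invoke structure not present in the hypotheses. Moreover, the classical Sard theorem for maps $\R^2\to\R$ requires $C^2$ regularity, and even at a regular value of a $C^1$ function the level set produced by the implicit function theorem is only $C^1$, so the claim that $\partial\{u>\delta\}$ is $C^2$ for a.e.\ $\delta$ is unwarranted. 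None of this machinery is needed. Since $\nabla^2_\xi F$ is $0$-homogeneous and hence bounded, $F_\xi$ is globally Lipschitz and $\Phi:=F_\xi(\nabla u)\in W^{1,2}(\Omega;\R^2)$; the $2\times2$ Piola identity then holds in $\mathcal D'(\Omega)$ simply because distributional mixed partials commute, giving $\divergenza V=2\,{\det}_H[u]$ in $\mathcal D'(\Omega)$ with $V\in L^2(\Omega;\R^2)$ and ${\det}_H[u]\in L^1(\Omega)$. Since $u\in C^1(\bar\Omega)$ vanishes on the $C^1$ boundary, $u\in W^{1,2}_0(\Omega)\cap L^\infty(\Omega)$, and one may pick $\varphi_k\in C_c^\infty(\Omega)$ with $\varphi_k\to u$ in $W^{1,2}(\Omega)$, $\sup_k\|\varphi_k\|_\infty<\infty$ and $\varphi_k\to u$ a.e.; passing to the limit in $\int_\Omega V\cdot\nabla\varphi_k\,dx=-2\int_\Omega {\det}_H[u]\,\varphi_k\,dx$ yields \eqref{ciasa} directly, with no level sets, no Sard theorem, and no sign restriction.
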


For a function $u\in \Phi_{0}(\Omega)$, we denote the set $E_{u}$ as the following:
\begin{equation}
\label{Eu}
E_{u}=\{x\in\Omega\colon 0\le u(x)<{\max}_{\bar\Omega}u\}.
\end{equation}
Being $u\in C^{1}(\bar \Omega)$, $E_{u}$ is an open set.
\begin{theo}
\label{curvani}
Let $u\in \Phi_{0}(\Omega)\cap C^{2}(E_{u})$, $u\not\equiv 0$, and take $x\in \bar\Omega$ such that $u(x)\in [0,\max_{\bar\Omega}u[$.
\end{theo}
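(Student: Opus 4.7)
The theorem statement is truncated in the excerpt, but from Section~3's title and the introduction's promise of \emph{``a relation between the operator \eqref{detintr} and the anisotropic curvature of the level sets of $u$''}, the conclusion is almost certainly a pointwise identity expressing ${\det}_H[u](x)$ in terms of the anisotropic curvature $k_H$ of the level line $\{u=u(x)\}$ (together, most likely, with a second derivative of $u$ in the anisotropic normal direction).

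My plan starts with a case split on $\nabla u(x)$. Since $u$ is concave, a critical point must be a global maximum, so $\nabla u(x)=0$ together with $u(x)<\max_{\bar\Omega}u$ can occur only at boundary points; at such points both sides of the claimed identity should vanish trivially. In the main case $\nabla u(x)\ne 0$, the implicit function theorem provides a $C^{2}$ parametrization of $\Gamma:=\{u=u(x)\}$ near $x$; the concavity of $u$ together with the strong convexity of $H^{2}$ then forces $\{u\ge u(x)\}$ to be locally convex with $C^{2}$ boundary $\Gamma$, so formula \eqref{curvform2} applies at $x$ and reads $k_H(x)=-\sum_{i,j}H_{\xi_i\xi_j}(\nabla u)\,u_{ij}$.

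The algebraic core is to exploit
\[
F_{\xi_i\xi_j}=H_{\xi_i}H_{\xi_j}+H\,H_{\xi_i\xi_j}
\]
to split
\[
A[u]=\big(H_\xi(\nabla u)\otimes H_\xi(\nabla u)\big)\,\nabla^{2}u+H(\nabla u)\,\nabla^{2}_\xi H(\nabla u)\,\nabla^{2}u,
\]
and to use that the $0$-homogeneity of $H_{\xi_i}$ gives $\nabla^{2}_\xi H(\nabla u)\cdot\nabla u=0$ as a vector identity. In two dimensions this forces $\nabla^{2}_\xi H(\nabla u)$ to be rank one with range orthogonal to $\nabla u$. Working in coordinates with $\nabla u$ along the second axis, each summand of $A[u]$ becomes explicit and the $2\times 2$ determinant collapses, via \eqref{eul}--\eqref{eq:HH0}, to the product of $k_H$ and a second derivative of $u$ in the anisotropic normal direction $H_\xi(\nabla u)$.

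The main obstacle is the algebraic bookkeeping linking $k_H$ (a contraction of $\nabla^{2}u$ with $H_{\xi_i\xi_j}$) to ${\det}_H[u]$ (a determinant of $F_{\xi_i\xi_j}\nabla^{2}u$); the geometric ingredients (implicit function theorem plus convexity of the superlevel set) are routine. The reconciliation hinges on the rank-one structure available in two dimensions and on the Euler identities, which kill the cross terms so that the $2\times 2$ determinant factorizes cleanly.
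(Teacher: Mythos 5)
You have misidentified the conclusion. The ``Then\dots'' display that follows got detached from the theorem environment by a formatting accident, but it is plainly the statement of Theorem \ref{curvani}: it asserts the pointwise identity
\[
k_{H}(x) = -H(\nabla u(x))^{-3} \sum_{i,j} S^{ij}(A[u(x)])\,F_{\xi_i}(\nabla u(x))\,u_j(x),
\]
where $(S^{ij}(A[u]))=\Cof A[u]$. That is, the anisotropic curvature of the level line $\{u=u(x)\}$ is a contraction of the \emph{cofactor} matrix of $A[u]$ against $F_\xi(\nabla u)$ and $\nabla u$, not a factorization of ${\det}_H[u]$. The identity you propose, ${\det}_H[u]=k_H\cdot(\text{a second normal derivative})$, is generically false already in the Euclidean case: with $\nabla u=(0,u_2)$ one has $\det\nabla^2 u=u_{11}u_{22}-u_{12}^2$ while $k=-u_{11}/|u_2|$, and the cross term $u_{12}^2$ cannot be absorbed into a ``normal second derivative'' at a single point. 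The relation between ${\det}_H[u]$ and $k_H$ in this paper is only integral: combining \eqref{curvform} with the Cianchi--Salani identity \eqref{ciasa} yields the Reilly-type formula \eqref{reilly}.

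A secondary issue: the case $\nabla u(x)=0$ cannot occur under the stated hypotheses, so your case split is vacuous rather than trivial. For a concave $u$, $\nabla u(x)=0$ forces $u(x)=\max_{\bar\Omega}u$, which the assumption $u(x)<\max_{\bar\Omega}u$ excludes; this one-line observation is exactly how the paper opens its proof. Your algebraic ingredients --- the splitting $\nabla^2_\xi F=H_\xi\otimes H_\xi+H\,\nabla^2_\xi H$, the Euler identities \eqref{eul}--\eqref{eq:HH0}, and the consequence $\nabla^2_\xi H(\nabla u)\,\nabla u=0$ of $0$-homogeneity --- are correct and do drive the computation, but the paper does not rotate coordinates. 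It writes $S^{ij}(A[u])$ as an explicit sum of two $2\times2$ matrices (one from each piece of $\nabla^2_\xi F$), contracts with $F_{\xi_i}u_j$, uses $H_\xi\cdot\nabla u=H$ and $\sum_j H_{ij}(\nabla u)u_j=0$ to kill the extra terms, and identifies what remains as $H^3(\nabla u)\,k_H$ via $k_H=-H_{ij}u_{ij}$ from \eqref{curvform2}. Your plan could reach the same endpoint, but only if you retarget it at the cofactor contraction rather than at the determinant.
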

Then
\begin{equation}
\label{curvform}
k_{H}(x)= -H(\nabla u(x))^{-3} \sum_{i,j} \, S^{ij}(A[u(x)])\,F_{\xi_{i}}(\nabla u(x))\, u_{j}(x),
\end{equation}
where $k_{H}(x)$ is the anisotropic curvature of the level set $\{y\in\Omega\colon u(y)=u(x)\}$ at the point $x$.
\begin{proof}
First of all, we observe that $|\nabla u(x)|\ne 0$ by the regularity and the concavity of $u\not\equiv 0$. Denoting $H_{\xi_{i}}=H_{i}$, $H_{\xi_{i}\xi_{j}}=H_{ij}$, and using the Einstein summation convention, we have
\[
(S^{ij}(A[u]))_{ij} = 
\begin{pmatrix}
H_{2} H_{m}\, u_{2m} & -H_{2} H_{m}\,u_{1m} \\ 
-H_{1} H_{m}\,u_{2m} & H_{1} H_{m}\, u_{1m}
\end{pmatrix}+
H
\begin{pmatrix}
 H_{2m}\, u_{2m} & - H_{2m}\,u_{1m} \\ 
-H_{1m}\,u_{2m} & H_{1m}\, u_{1m}
\end{pmatrix}.
\]
Then, recalling that $k_{H}=-H_{ij}u_{ij}$, we have
\[
\begin{split}
S^{ij}&(A[u])F_{\xi_{i}}(\nabla u)u_{j}\\&= H^{2}(\nabla u)
\left[ H_{2m}u_{2m} H_{1}u_{1} + H_{1m}u_{1m}H_{2}u_{2}
-H_{2m}u_{1m}H_{1}u_{2}-H_{1m}u_{2m}H_{2}u_{1}\right]\\&=
H^{2}(\nabla u)\left[-k_{H}(H_{1}u_{1}+H_{2}u_{2})+\right.\\
&\qquad\quad\left.
-H_{1}u_{11}( H_{11} u_{1} + H_{12} u_{2})-H_{1}u_{12}(H_{21}u_{1}+H_{22}u_{2})+\right.\\
&\qquad\quad\left.-H_{2}u_{12}(H_{11}u_{1}+H_{12}u_{2})-H_{2}u_{22}(H_{21}u_{1}+H_{22}u_{2})
\right]
\\&=
H^3(\nabla u)\, k_{H}(x),
\end{split}
\]
where last equality follows from the 1 and 0 homogeneity of $H$ and $H_{\xi}$, respectively, being $\nabla_{\xi} H_{i}(\nabla u) \cdot \nabla u=0$. 
\end{proof}
\begin{rem}
We stress that if $H(x)=(x_{1}^{2}+x_{2}^{2})^{1/2}$ is the Euclidean norm of $\R^{2}$, then the identity \eqref{curvform} reduces to the well-known formula for the Euclidean curvature of the level sets of $u$:
\[
k(x)= -|\nabla u|^{-3} 
S^{ij}(\nabla^{2}u)\, u_{i}\, u_{j},
\]
with 
\[(S^{ij}(\nabla^{2}u))_{ij}= 
\begin{pmatrix}
u_{22} & -u_{12} \\ 
-u_{12} & u_{11} 
\end{pmatrix}.
\]
\end{rem}
The following Reilly-type inequality for the anisotropic determinant holds.
\begin{prop}
Let $u\in \Phi_{0}(\Omega)\cap C^{2}(E_u)$, where $E_u$ is defined in \eqref{Eu} and  $\Omega$ is a bounded convex open set such that $\de \Omega\in C^{2}$. Then 
\begin{equation}
\label{reilly}
\int_{u>t} {\det}_{H}[u]\, dx = \frac 1 2 \int_{u=t} k_{H}(x) \frac{H(\nabla u)^{3}}{|\nabla u|} d\mathcal H^{1}, \quad  t \in [0,\max_{\bar\Omega}u[.
\end{equation}
\end{prop}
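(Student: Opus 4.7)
The plan is to recognize $\det_{H}[u]$ as one half of a divergence and then apply the divergence theorem on the superlevel set $\{u>t\}$, identifying the boundary integrand through Theorem \ref{curvani}.

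The first step is to exhibit $\det_{H}[u]$ in divergence form. Since $A[u]_{ij}=\partial_{x_{j}}[F_{\xi_{i}}(\nabla u)]$, the matrix $A[u]$ is the Jacobian of the vector field $V_{i}:=F_{\xi_{i}}(\nabla u)$. In two dimensions a direct computation of the four entries of $\Cof(A[u])$, using the commutation of mixed partial derivatives of $V$, shows that the rows of $\Cof(A[u])$ are divergence-free:
\[
\sum_{j}\partial_{x_{j}}S^{ij}(A[u])=0,\qquad i=1,2.
\]
Combining this with the $2\times 2$ algebraic identity $\sum_{i,j}S^{ij}(B)\,B_{ij}=2\det B$ applied to $B=A[u]$, one obtains the divergence form
\[
2\det_{H}[u]=\sum_{i,j}S^{ij}(A[u])\,\partial_{x_{j}}F_{\xi_{i}}(\nabla u)=\sum_{j}\partial_{x_{j}}\Bigl[\sum_{i}S^{ij}(A[u])\,F_{\xi_{i}}(\nabla u)\Bigr].
\]

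For $t\in[0,\max_{\bar\Omega}u[$, the hypotheses on $u$ guarantee that $\nabla u\neq 0$ on $\{u=t\}$: for $t>0$ this follows from the concavity and $C^{2}$-regularity of $u$ on $E_{u}$ together with the implicit function theorem; for $t=0$ one uses the $C^{2}$-regularity of $\partial\Omega$, approximating by $t\to 0^{+}$ if needed. Hence $\{u>t\}$ is a bounded open set with $C^{2}$ boundary and outer unit normal $\nu=-\nabla u/|\nabla u|$. Integrating the divergence identity over $\{u>t\}$ and applying the divergence theorem yields
\[
2\int_{\{u>t\}}\det_{H}[u]\,dx=-\int_{\{u=t\}}\frac{1}{|\nabla u|}\sum_{i,j}S^{ij}(A[u])\,F_{\xi_{i}}(\nabla u)\,u_{j}\,d\mathcal H^{1}.
\]
By \eqref{curvform} in Theorem \ref{curvani} the inner sum equals $-k_{H}(x)\,H(\nabla u)^{3}$, and \eqref{reilly} follows after dividing by $2$.

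The main obstacle is the first step: although $A[u]$ is in general not symmetric (it is a product of two symmetric matrices that need not commute), one must recognize that it is nevertheless the Jacobian of $V=F_{\xi}(\nabla u)$, which in two dimensions is precisely the structural property forcing the rows of its cofactor to be divergence-free. This is the same hidden divergence structure that underlies the integration by parts formula \eqref{ciasa} of Lemma \ref{ciasaprop}.
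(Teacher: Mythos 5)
Your proof is correct, and it takes a genuinely more direct route than the paper's. The paper does not exhibit the pointwise divergence structure of $\det_H[u]$; instead it invokes the Cianchi--Salani integration-by-parts identity of Lemma \ref{ciasaprop} as a black box, applies it to $u-t$ on $\{u>t\}$ and to $u-t-h$ on $\{u>t+h\}$, subtracts, rewrites the resulting bulk term on the slab $\{t<u\le t+h\}$ via \eqref{curvform} and the coarea formula, then divides by $h$ and passes to the limit $h\to 0^{+}$. You instead observe that $A[u]=\nabla_{x}\bigl[F_{\xi}(\nabla u)\bigr]$ is a Jacobian, so by the two-dimensional Piola identity the rows of $\Cof A[u]$ are divergence-free, giving
\[
2\,{\det}_{H}[u]=\sum_{j}\partial_{x_{j}}\Bigl[\sum_{i}S^{ij}(A[u])\,F_{\xi_{i}}(\nabla u)\Bigr],
\]
and then a single application of the divergence theorem on $\{u>t\}$ together with $\nu=-\nabla u/|\nabla u|$ and formula \eqref{curvform} yields \eqref{reilly} at once. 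Both arguments rest on Theorem \ref{curvani} and, ultimately, on the same hidden null-Lagrangian structure --- indeed your display is precisely what makes Lemma \ref{ciasaprop} true --- but your version skips the two-level subtraction, the coarea rewriting, and the limiting step, making the Reilly identity a one-step consequence of the divergence theorem. The only point to be slightly careful about, in both proofs, is regularity near the maximum of $u$ (where $\nabla u$ vanishes and $A[u]$ need not be continuous); you implicitly need that the tangential field $x\mapsto\sum_{i}S^{ij}(A[u])F_{\xi_{i}}(\nabla u)$ admits the divergence theorem on all of $\{u>t\}$, which in the $W^{2,2}\cap C^{1}$ framework of $\Phi_{0}(\Omega)$ follows from the distributional Piola identity but deserves a word of justification; the paper sidesteps this by delegating it to \cite{ciasal}. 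As a small aside, the paper's displayed identity in its proof carries a sign typo (the term $-h\int_{u>t+h}\det_{H}[u]$ should read $+h\int_{u>t+h}\det_{H}[u]$); your computation produces the correct signs directly.
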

\begin{proof}
Let $t\in [0,\max_{\bar\Omega}u[$, and $h>0$ sufficiently small, and apply Lemma \ref{ciasaprop} to the functions $u-t$ and $u-t-h$ in the sets $\{u>t\}$ and $\{u>t+h\}$ respectively. By subtracting, and recalling \eqref{curvform} we have
\begin{align*}
\int_{t< u\le t+h} (u-t){\det}_{H}[u] dx - h\int_{u>t+h} {\det}_{H}[u] dx &= \frac 1 2\int_{t< u\le t+h} k_{H}(x) H(\nabla u)^{3} dx=\\[.1cm]
&= \frac 1 2 \int_{t}^{t+h} d\tau \int_{u=\tau} k_{H}(x) \frac{H(\nabla u)^{3}}
{|\nabla u|} d\mathcal H^{1},
\end{align*}
where last equality follows from the coarea formula.
Hence, dividing for $h$ and passing to the limit, we easily get \eqref{reilly}.
\end{proof}
\subsection{The anisotropic Monge-Amp\`ere operator for radial functions}
Let $v(x)=w(H^{o}(x))=w(r)$, $x\in \mathcal W_{R}$, $r=H^{o}(x)$. We compute the operator $\det_{H}$ on $v$. We have:
\[
\nabla v(x)=w'(r) H^{o}_{\xi}(x).
\]
Then, using the homogeneity of $H$ and properties \eqref{eq:H1} and \eqref{eq:HH0}, it follows that
\[
F_{\xi}(\nabla v(x))= w'(r)\,H(H_{\xi}^{o}(x))\,H_{\xi}(H_{\xi}^{o}(x)) = w'(r) \frac{x}{H^{o}(x)}.
\]
Then
\[
A[v] = 
\begin{pmatrix}
w'' \frac{x_{1} H^{o}_{\xi_{1}}}{r}+\frac{w'}{r^{2}}\left(r-x_{1} H_{\xi_{1}}^{o}\right) & \frac{x_{2} H_{\xi_{1}}^{o}}{r}\left(w''-\frac{w'}{r}\right) \\[.3cm]
\frac{x_{1} H_{\xi_{2}}^{o}}{r}\left(w''-\frac{w'}{r}\right) & w'' \frac{x_{2} H^{o}_{\xi_{2}}}{r}+\frac{w'}{r^{2}}\left(r-x_{2} H_{\xi_{2}}^{o}\right)
\end{pmatrix}
\]
Then, computing the determinant of $A[v]$ and using \eqref{eul}, a 
straightforward computation leads to
\begin{equation}
\label{rad}
\det A[v] = \frac{w' w''}{r}=\frac{[(w')^{2}]'}{2r}.
\end{equation}
Hence the function
\[
	v(x)=\sqrt 2\int_{H^{o}(x)}^{R} \left(\int_{0}^{s} r\,g(r)\,dr	
	\right)^{\frac 1 2}ds
\]
is such that $v \in \Phi_0(\mathcal{W}_R) \cap C^2(\mathcal{W}_R\setminus\{0\})$ and $v$ is the unique anisotropic radially symmetric function  such that
\[
\left\{
\begin{array}{ll}
\det_{H}[v]= g(H^{o}(x)) & \text{ a.e. in }\mathcal W_{R}\setminus\{0\}\\
v=0 &\text{ on }\de\mathcal W_{R}.\\
\end{array}
\right.
\] 

\section{Symmetrization with respect to the anisotropic perimeter}
Now we recall some basic definition on rearrangements and convex symmetrization. Moreover  for a given function $u$, we introduce a new kind of symmetrization which preserves the anisotropic perimeter of the level sets of $u$.

Let $\Omega$ be a bounded open set, and $u\colon \Omega\rightarrow \R$ a  measurable function. We will adopt the following notation:
\[
\Omega_t =\{x \in \Omega \colon |u(x)|>t \} \text{ and } \Sigma_t= \de\Omega_t=  \{x \in \Omega \colon |u(x)|=t \}.
\]  
Moreover, $\mu(t)=|\Omega_{t}|$, $t\ge 0$ is the distribution function of $u$. 

The {decreasing rearrangement} of $u$ is the map
$u^*:\,[0,\infty[\rightarrow \R$ defined by
    \begin{equation*}
        u^*(s):=\sup\{t\in\R:\mu(t)>s\}.
    \end{equation*}
The function $u^*$ is the generalized inverse of $\mu$. 

Following \cite{aflt}, the convex symmetrization of $u$ is the
function $u^\conv(x)$, $x\in \Omega^\conv$ defined by:
\begin{equation*}
    u^\conv(x)=u^*(\kappa H^o(x)^2),
\end{equation*}
where $\Omega^\conv$ is a set homothetic to the Wulff shape having
the same measure of $\Omega$.

Now suppose that $\Omega$ is a convex set of $\R^2$ and let $u\ge 0$ be a measurable function with convex level sets.  For  $t\in [0,\max_{\bar \Omega} u]$ the anisotropic perimeter of the level set 
$\Omega_{t}$ is denoted with 
\begin{equation}
\label{lambda}
\lambda_H(t)= P_{H}(\Omega_{t}).
\end{equation}
It is well-known that $u$, $u^{\conv}$ and $u^*$ are equimeasurable.

\begin{definiz}
\label{asim}
The rearrangement of $u$ with respect to the anisotropic perimeter is the function $s\in [0,P_{H}(\Omega)]\mapsto u^\aste(s)\in [0,\max_{\bar\Omega}u]$ defined as
\[
u^{\aste}(s)=\sup\{t\ge 0\colon \lambda_{H}(t)\ge s\}.
\]
Moreover, we define the anisotropic radial symmetrand of $u$ with respect to the anisotropic perimeter the function
\[
u^{\stella}(x)=u^{\aste}\big(2\,\kappa\,H^{o}(x)\big),\quad x\in \Omega^{\stella},
\]
where $\Omega^{\stella}$ is the set homothetic to the Wulff shape $\mathcal W$ such that $P_{H}(\Omega^{\stella})=P_{H}(\Omega)$. More precisely, $\Omega^{\stella}=\mathcal W_{R}$, with $R=\frac{P_{H}(\Omega)}{2\kappa}$. 
\end{definiz}

From now on, we will suppose that $u\in \Phi_{0}(\Omega) \cap C^2(E_u)$, where $E_u$ is defined in \eqref{Eu}.

The  functions $ u^{\aste}$ and $u^{\stella}$ have the following properties:
\begin{itemize}
\item[(i)] $u^{\aste}$ is a concave and decreasing function in $[0,P_H(\Omega)]$;
\item[(ii)]  $u^{\stella}(x)$  is symmetric and decreasing with respect to $H^{o}$;
\item[(iii)] The sets   $\{u^\stella>t\}$ are homothetic to the Wulff shape such that $P_{H}(\{u^\stella>t\})=P_{H}(\{u>t\})$.
\item[(iv)] $u^{\aste}(\lambda_H(t))=t$.
\end{itemize} 
If $u\in \Phi_{0}(\Omega)$, the coarea formula gives that
\begin{equation}
\label{muprimo}
\mu'(t)= -\int_{\Sigma_{t}}\frac{1}{|\nabla u|}d\mathcal H^{1},\quad t\in[0,{\max}_{\bar\Omega}u[.
\end{equation}
Moreover, we have the following result.
\begin{prop}
\label{proplambda}
If $u\in \Phi_{0}(\Omega)\cap C^2(E_u)$, the function $\lambda_H(t)$ defined in \eqref{lambda} is strictly decreasing in $[0, \max_{\Omega} u]$, it is differentiable in $[0,\max_{\bar\Omega} u[$ and its derivative is
\begin{equation}
\label{proplambdaeq}
\lambda_{H}'(t) = -\int_{\Sigma_{t}} \frac{k_{H}(x)}{|\nabla u|}\, d\mathcal H^{1}.
\end{equation}
\end{prop}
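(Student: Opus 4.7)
The plan is to recast $\lambda_H(t)$ as a volume integral of the anisotropic curvature and then differentiate via the coarea formula.

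Fix $0\le t<s<\max_{\bar\Omega}u$ and set $A_{t,s}=\{t<u<s\}$. The concavity of $u$ together with $u\not\equiv 0$ forces $|\nabla u|>0$ off the set where $u$ attains its maximum, so $|\nabla u|>0$ on $\overline{A_{t,s}}$ and $\Sigma_t,\Sigma_s$ are $C^2$ Jordan curves by the implicit function theorem. I would apply the divergence theorem to the vector field $H_\xi(\nabla u)$ on $A_{t,s}$: the outer unit normal to $A_{t,s}$ equals $+\nabla u/|\nabla u|$ on $\Sigma_s$ and $-\nabla u/|\nabla u|$ on $\Sigma_t$, and Euler's identity \eqref{eul} together with the $1$-homogeneity and evenness of $H$ give
\[
H_\xi(\nabla u)\cdot\left(\pm\frac{\nabla u}{|\nabla u|}\right)=\pm H(\nu).
\]
Combining this with $\divergenza H_\xi(\nabla u)=-k_H$, which follows from \eqref{curvform2}, produces
\[
\lambda_H(t)-\lambda_H(s)=\int_{A_{t,s}} k_H(x)\,dx.
\]

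Since $|\nabla u|>0$ throughout $A_{t,s}$, the coarea formula rewrites the right-hand side as $\int_t^s\!\int_{\Sigma_\tau} k_H/|\nabla u|\,d\mathcal H^1\,d\tau$. The inner integral is continuous in $\tau\in[0,\max_{\bar\Omega}u[$: near any point of $\Sigma_\tau$ one can straighten the level curves via a $C^2$ diffeomorphism, legitimate because $|\nabla u|$ is bounded away from zero on compact subsets of $E_u$, making the surface integrals $C^0$ functions of $\tau$. The fundamental theorem of calculus then yields \eqref{proplambdaeq}.

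For strict monotonicity I would argue $\lambda_H'<0$. Convexity of $\Omega_\tau$ as a super-level set of the concave function $u$ gives $k_H\ge 0$ on $\Sigma_\tau$, and the anisotropic Gauss-Bonnet identity \eqref{gaussbonnet} yields $\int_{\Sigma_\tau} k_H\,H(\nu)\,d\mathcal H^1=2\kappa>0$, so $k_H$ must be strictly positive on a set of positive $\mathcal H^1$-measure. Since $|\nabla u|$ is bounded on $\bar\Omega$ and $H(\nu)$ is bounded above, this forces $\int_{\Sigma_\tau} k_H/|\nabla u|\,d\mathcal H^1>0$, hence $\lambda_H'(\tau)<0$ on $[0,\max_{\bar\Omega}u[$. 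Combined with $\lambda_H(\max_{\bar\Omega}u)=P_H(\emptyset)=0$, this gives strict decrease on the whole of $[0,\max_{\bar\Omega}u]$.

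The main obstacle I expect is justifying the continuity in $\tau$ of the surface integral, which underlies the differentiation step; the cleanest route is the local level-curve straightening mentioned above, whose validity rests on $u\in C^2(E_u)$ and on the fact that $\overline{A_{t,s}}$ is compact inside $E_u$, so that $|\nabla u|$ remains uniformly bounded away from zero there.
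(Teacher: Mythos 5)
Your argument is correct and follows essentially the same route as the paper: use the divergence theorem together with \eqref{curvform2} to turn the anisotropic perimeter into a bulk integral of the anisotropic curvature, then differentiate via the coarea formula. The only noteworthy refinements are that you apply the divergence theorem on the annulus $A_{t,s}$ rather than on the full super-level set $\Omega_t$, which tidily avoids the potential singularity of $\nabla_\xi H(\nabla u)$ at the set where $\nabla u=0$ (an issue the paper's one-line application on $\Omega_t$ glosses over), and that you make explicit, via Gauss--Bonnet and the boundedness of $|\nabla u|$ and $H(\nu)$, the strict negativity of $\lambda_H'$, whereas the paper leaves the strict-decrease assertion implicit in this proposition and only extracts the quantitative bound in the subsequent Lipschitz estimate for $u^\aste$.
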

\begin{proof}
Using the homogeneity of $H$ and the divergence theorem we get
\begin{multline*}
	\lambda_{H}(t)= \int_{\Sigma_{t}} H(\nu_{\Omega_{t}})d\mathcal H^{1}=
	-\int_{\Sigma_{t}} \nabla_{\xi} H(\nabla u) \cdot \nu_{\Omega_{t}} d\mathcal H^{1}= \\ =
	-\int_{\Omega_{t}} \divergenza \nabla_{\xi}H(\nabla u)\, dx= \int_{\Omega_{t}} k_{H}(x) dx.
\end{multline*}
Hence, being $|\nabla u|\ne 0$ on $\Sigma_{t}$, for any $t\in [0,\max_{\Omega}u[$,  by the coarea formula we obtain, for $t\in [0,\max_{\bar\Omega} u[$, that
\[
\frac 1 h \left[\lambda_{H}(t)-\lambda_{H}(t+h)\right] = \frac 1 h \int_{\{t< u\le t+h\}} k_{H}(x)\,dx= \frac 1 h \int_{t}^{t+h} \int_{\Sigma_{t}}\frac{k_{H}(x)}{|\nabla u|} d\mathcal H^{1}.
\]
Passing to the limit, we get \eqref{proplambdaeq}.
\end{proof}
As a consequence of Proposition \ref{proplambda}, we have that
\begin{prop} 
The function $u^{\aste}$ belongs to $C^{0,1}(]0,P_{H}(\Omega)])$, and there exists a positive constant $C>0$ such that
\[
0\le -(u^{\aste})'(s) \le C, \text{ for any }  s \in ]0,P_{H}(\Omega)].
\]
\end{prop}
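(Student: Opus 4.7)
The plan is to read off the Lipschitz bound from the identity $u^{\aste}=\lambda_H^{-1}$ (property (iv)), once I produce a uniform positive lower bound on $-\lambda_H'$. By Proposition \ref{proplambda}, $\lambda_H\colon[0,\max_{\bar\Omega}u[\;\to\;]0,P_H(\Omega)]$ is strictly decreasing and differentiable with $-\lambda_H'(t)=\int_{\Sigma_t}k_H/|\nabla u|\,d\mathcal H^1$. If I can show $-\lambda_H'(t)\ge c$ for some $c>0$ independent of $t$, then for $0<s_1<s_2\le P_H(\Omega)$, letting $t_i=u^{\aste}(s_i)$, an elementary integration gives $s_2-s_1=-\int_{t_2}^{t_1}\lambda_H'(\tau)\,d\tau\ge c(t_1-t_2)$. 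This is Lipschitz continuity of $u^{\aste}$ with constant $1/c$; at each point of differentiability, $(u^{\aste})'(s)=1/\lambda_H'(t)\in[-1/c,0]$, giving the stated two-sided bound.

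The substantive step is the uniform lower bound on $-\lambda_H'$, which I would derive from the anisotropic Gauss--Bonnet formula \eqref{gaussbonnet}. For $t\in[0,\max_{\bar\Omega}u[$ the superlevel set $\Omega_t=\{u>t\}$ is bounded, convex (since $u$ is concave) and has $C^2$ boundary (since $u\in C^2(E_u)$ and $|\nabla u|\ne 0$ on $\Sigma_t$), so \eqref{gaussbonnet} applies and yields
\[
2\kappa=\int_{\Sigma_t}k_H(x)\,H(\nu)\,d\mathcal H^1.
\]
I would then couple this with $H(\nu)\le\beta$ coming from \eqref{betabound}, with $|\nabla u|\le M_0:=\|\nabla u\|_{L^\infty(\Omega)}<\infty$ (finite because $u\in C^1(\bar\Omega)$), and with $k_H\ge 0$ on $\Sigma_t$ (by \eqref{curvform2}: concavity of $u$ gives $-\nabla^2 u\ge 0$, convexity of $H$ gives $\nabla^2_{\xi} H(\nabla u)\ge 0$, and the trace of the product of two positive semidefinite matrices is non-negative), to obtain
\[
2\kappa\le \beta\int_{\Sigma_t}k_H\,d\mathcal H^1 \le \beta M_0\int_{\Sigma_t}\frac{k_H}{|\nabla u|}\,d\mathcal H^1 = -\beta M_0\,\lambda_H'(t),
\]
so that $c:=2\kappa/(\beta M_0)$ works uniformly and the announced constant is $C=\beta M_0/(2\kappa)$.

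The main obstacle is precisely this lower bound on $-\lambda_H'$: the pointwise comparisons $H(\nu)\le\beta$ and $|\nabla u|\le M_0$ are immediate, but turning $\int k_H H(\nu)=2\kappa$ into a lower bound for $\int k_H/|\nabla u|$ needs both the sign information $k_H\ge 0$ and the global $L^\infty$ control of $|\nabla u|$; the former is the only non-trivial ingredient and follows from \eqref{curvform2} as indicated above. Note that the bound necessarily degrades as $t\to \max_{\bar\Omega}u^-$, where $\Sigma_t$ shrinks to a point, which matches the open endpoint $s=0$ excluded from the statement.
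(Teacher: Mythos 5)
Your proposal is correct and follows essentially the same route as the paper: both use Proposition \ref{proplambda} and the anisotropic Gauss--Bonnet formula \eqref{gaussbonnet}, together with $H(\nu)\le\beta$ and $|\nabla u|\le\max_{\bar\Omega}|\nabla u|$, to get the uniform lower bound $-\lambda_H'(t)\ge 2\kappa/(\beta\max_{\bar\Omega}|\nabla u|)$, and then pass to $u^{\aste}$ as the inverse of $\lambda_H$. The only differences are cosmetic: you write the chain of inequalities in the opposite direction, you make the sign condition $k_H\ge 0$ (needed to preserve the inequalities) explicit via \eqref{curvform2}, and you derive the Lipschitz bound by integrating rather than by the inverse-function-theorem shortcut the paper uses.
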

\begin{proof}
Let $t\in [0,\max_{\bar\Omega}u[$.  By Proposition \ref{proplambda}, \eqref{betabound} and formula \eqref{gaussbonnet}, we have
\begin{gather}
\begin{split}
\label{reg}
-\lambda_{H}'(t) =& \int_{\Sigma_{t}} \frac{k_{H}(x)}{|\nabla u|}\, d\mathcal H^{1}\ge \frac{1}{\max_{\Omega}|\nabla u|}\int_{\Sigma_{t}} k_{H}(x)\, d\mathcal H^{1}\\
&\ge \frac{1}{\beta\max_{\bar\Omega}|\nabla u| }\int_{\Sigma_{t}} k_{H}(x) H(\nu)\, d\mathcal H^{1}\\
&=\frac{2 \kappa}{\beta\max_{\bar\Omega}|\nabla u| }.
\end{split}
\end{gather}
Being $u^{\aste}(\lambda_H(t))=t$ then 
\[
\lambda_{H}'(t)=\frac{1}{\frac{d}{dt}u^{\aste}(\lambda_H(t))}.
\]
Substituting in \eqref{reg} we get the thesis.
\end{proof}
The main difference between the symmetrand  of $u$ with respect to the anisotropic perimeter $u^{\stella}$ and the convex symmetrand of $u$,  $u^{\conv}(x)$,  is that, in general, the first one increases the Lebesgue norms of $u$. Indeed, we have the following. 
\begin{prop}
Let be $u \in \Phi_0(\Omega)\cap C^2(E_u)$ and $u^{\stella }$ as in Definition \ref{asim}. Then
\[ 
\|u\|_{L^p(\Omega)} \le \|u^{\stella}\|_{L^p(\Omega^{\stella})}, \quad 1\le p <+\infty,
\]
and
\[
\|u\|_{L^{\infty}(\Omega)}=\|u^{\stella}\|_{L^{\infty}(\Omega^{\stella})}.
\]
\end{prop}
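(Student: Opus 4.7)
The plan is to reduce both inequalities to a comparison of the distribution functions of $u$ and $u^{\stella}$, and then invoke the anisotropic isoperimetric inequality \eqref{isop}.

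First I would dispose of the $L^{\infty}$ statement. By property (iv), $u^{\aste}(\lambda_H(t))=t$, and since $\lambda_H(t)\downarrow 0$ as $t\uparrow \max_{\bar\Omega}u$, we have $u^{\aste}(0)=\max_{\bar\Omega}u$. Because $u^{\stella}$ is decreasing with respect to $H^{o}$, its supremum is attained at the origin and equals $u^{\aste}(0)$, so $\|u^{\stella}\|_{L^{\infty}(\Omega^{\stella})}=\max_{\bar\Omega}u=\|u\|_{L^{\infty}(\Omega)}$.

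For the $L^p$ part, the key is to identify the level sets of $u^{\stella}$. Since $\lambda_H$ is strictly decreasing (Proposition \ref{proplambda}) and $u^{\aste}$ is its generalized inverse, one computes
\[
\{u^{\stella}>t\}=\bigl\{x\in\Omega^{\stella}:2\kappa H^{o}(x)<\lambda_H(t)\bigr\}=\mathcal W_{r(t)},\qquad r(t)=\frac{\lambda_H(t)}{2\kappa},
\]
so that $P_H(\{u^{\stella}>t\})=\lambda_H(t)=P_H(\Omega_t)$ by construction, and
\[
\bigl|\{u^{\stella}>t\}\bigr|=\kappa r(t)^2=\frac{\lambda_H(t)^{2}}{4\kappa}.
\]
On the other hand, the anisotropic isoperimetric inequality \eqref{isop} applied to $\Omega_t$ gives
\[
|\Omega_t|\le \frac{P_H(\Omega_t)^2}{4\kappa}=\frac{\lambda_H(t)^{2}}{4\kappa}=\bigl|\{u^{\stella}>t\}\bigr|,
\]
the point being that the Wulff-shape level sets of $u^{\stella}$ saturate the isoperimetric inequality, while those of $u$ do not.

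I would then conclude using Cavalieri's principle: for $1\le p<\infty$,
\[
\int_{\Omega}u^{p}dx=p\int_{0}^{\max_{\bar\Omega}u}t^{p-1}|\Omega_t|\,dt\le p\int_{0}^{\max_{\bar\Omega}u}t^{p-1}\bigl|\{u^{\stella}>t\}\bigr|\,dt=\int_{\Omega^{\stella}}(u^{\stella})^{p}dx,
\]
which yields the claim. I do not anticipate a genuine obstacle: the only care needed is to make sure the identification $\{u^{\stella}>t\}=\mathcal W_{\lambda_H(t)/(2\kappa)}$ is rigorously justified from the definition, which follows from the strict monotonicity of $\lambda_H$ (and hence of $u^{\aste}$) established in Proposition \ref{proplambda}.
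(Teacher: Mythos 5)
Your proof is correct and follows essentially the same route as the paper: the core step is to compare distribution functions via the anisotropic isoperimetric inequality, showing $|\Omega_t|\le \lambda_H(t)^2/(4\kappa)=|\{u^{\stella}>t\}|$, after which the $L^p$ claim follows by the layer-cake formula. The paper simply states this distribution-function inequality and leaves the Cavalieri step and the $L^\infty$ case implicit, whereas you spell them out; one small caveat is that your remark ``$\lambda_H(t)\downarrow 0$ as $t\uparrow\max u$'' need not hold if $\{u=\max u\}$ has positive measure, but this is immaterial since $u^{\aste}(0)=\max_{\bar\Omega}u$ follows directly from the definition of $u^{\aste}$ as a supremum.
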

\begin{proof}
It is enough to observe that, by the anisotropic isoperimetric inequality \eqref{isop} we have 
\[
|\{u>t\}|=\mu(t)\le  \frac{\lambda_H^2(t)}{4 \kappa}=\frac{P^2_H(\{u^{\stella}>t\})}{4 \kappa}=|\{u^{\stella}>t\}|.
\]
\end{proof}

In order to prove a P\'olya-type inequality for the symmetrization with respect to the anisotropic perimeter we need the following definition.
\begin{definiz}[Anisotropic Hessian integral]
Let $u \in \Phi_0(\Omega)\cap C^2(E_u)$. Then the anisotropic Hessian integral is
\[
I_H[u,\Omega]=\int_{\Omega} u\, {\det}_{H}[u]\,dx 
\]
\end{definiz}
\begin{rem}
By Theorem \ref{curvani} and the identity \eqref{ciasa}, for $u \in \Phi_0(\Omega)\cap C^2(E_u)$ the anisotropic Hessian integral can be written also as follows
\begin{equation}
\label{hesint}
I_H[u,\Omega]=\int_{\Omega} u\, {\det}_{H}[u]\,dx =\frac{1}{2}\int_{\Omega} k_H(x)\, H^3(\nabla u) \,dx.
\end{equation}
\end{rem}
When we consider anisotropic radially symmetric function $v(x)=w(H^o(x))$, the anisotropic Hessian integral, recalling \eqref{rad}, is naturally defined as follows.
\begin{definiz}
Let be $v$ a concave function in $C^{0,1}(\mathcal W_R)$, such that $v$ vanishes on $\de \mathcal W_R$ and  $v(x)=w(H^o(x))=w(r)$. Then 
\begin{equation}
\label{intrad}
I_H[v,\mathcal W_R]= \kappa \int_0^R |w'(t)|^3 \, dt.
\end{equation}
\end{definiz}
In particular, for $v(x)=u^{\stella}(x)=u^{\aste}(2\kappa H^{o}(x))$, $x\in\Omega^{\stella}$, $P_H(\Omega)=2\kappa R$ performing a change of variable we have that for 
\begin{equation}
\label{hesintrad}
	I_H[u^{\stella},\Omega^{\stella}]= 4\kappa^{3} \int_{0}^{P_{H}(\Omega)}
	|(u^{\aste})'(s)|^{3}ds.
\end{equation}

The following P\'olya-Szeg\"o inequality for anisotropic Hessian integral holds:
\begin{theo}
Let be $u \in \Phi_0(\Omega)\cap C^2(E_u)$. Then
\begin{equation}
\label{pzeq}
I_H[u,\Omega]\ge I_H[u^\stella,\Omega^\stella].
\end{equation}
Moreover, if $u$ is strictly concave, then the equality in \eqref{pzeq} holds if and only if, up to a translation, $\Omega=\Omega^{\stella}$ and $u=u^\stella$.
\end{theo}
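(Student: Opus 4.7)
The plan is to rewrite both sides of the desired inequality as single integrals in the level parameter $t$ and compare the integrands pointwise via H\"older's inequality combined with the anisotropic Gauss--Bonnet identity \eqref{gaussbonnet}.

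First, set $M=\max_{\bar\Omega}u$. The representation \eqref{hesint} and the coarea formula give
\[
2\,I_H[u,\Omega]=\int_0^{M}\!dt\int_{\Sigma_t}\frac{k_H(x)\,H^3(\nabla u)}{|\nabla u|}\,d\mathcal H^1.
\]
On the radial side, starting from \eqref{hesintrad}, differentiating the identity $u^\aste(\lambda_H(t))=t$ (property (iv) of Definition \ref{asim}) to get $(u^\aste)'(\lambda_H(t))=1/\lambda_H'(t)$ and then performing the change of variable $s=\lambda_H(t)$ yields
\[
I_H[u^\stella,\Omega^\stella]=4\kappa^3\int_0^M\frac{dt}{|\lambda_H'(t)|^2}.
\]

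The core is a pointwise-in-$t$ estimate. On $\Sigma_t$ introduce the positive measure $d\mu_t=(k_H/|\nabla u|)\,d\mathcal H^1$. By Proposition \ref{proplambda}, $\int_{\Sigma_t}d\mu_t=|\lambda_H'(t)|$; moreover, since $\nu_{\Omega_t}=-\nabla u/|\nabla u|$ on $\Sigma_t$ and $H$ is even and positively $1$-homogeneous, $H(\nabla u)/|\nabla u|=H(\nu_{\Omega_t})$, and the anisotropic Gauss--Bonnet Theorem \eqref{gaussbonnet} gives
\[
\int_{\Sigma_t}H(\nabla u)\,d\mu_t=\int_{\Sigma_t}k_H(x)\,H(\nu_{\Omega_t})\,d\mathcal H^1=2\kappa.
\]
Applying H\"older's inequality with exponents $3$ and $3/2$ yields
\[
(2\kappa)^3=\Bigl(\int_{\Sigma_t}H(\nabla u)\,d\mu_t\Bigr)^{\!3}\le\Bigl(\int_{\Sigma_t}H^3(\nabla u)\,d\mu_t\Bigr)\,|\lambda_H'(t)|^2,
\]
that is, $\int_{\Sigma_t}k_H H^3(\nabla u)/|\nabla u|\,d\mathcal H^1\ge 8\kappa^3/|\lambda_H'(t)|^2$. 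Integrating in $t$ and using the two displayed identities above yields \eqref{pzeq}.

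For the equality case, suppose $u$ strictly concave and equality holds in \eqref{pzeq}. Then H\"older equality must hold on $\Sigma_t$ for a.e.\ $t$, and since strict concavity gives $k_H>0$ (via \eqref{curvform}) while $|\nabla u|>0$ on $\Sigma_t\subset E_u$, the density of $d\mu_t$ is strictly positive and equality forces $H(\nabla u)\equiv c(t)$ on each level set. Equivalently, $F(\nabla u)$ depends only on $u$, so the family $\{\Omega_t\}$ evolves by a purely anisotropic inward-normal flow starting from $\partial\Omega$. The $C^2$ regularity hypothesis on $E_u$ excludes the formation of a cut locus along this evolution, which is possible only when $\partial\Omega$ is itself a homothet of $\partial\mathcal W$, so $\Omega=\Omega^\stella$ up to translation. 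Once the level sets of $u$ are Wulff shapes with prescribed anisotropic perimeters $\lambda_H(t)$, the reconstruction formula for anisotropic radial functions recalled in Section 3 yields $u=u^\stella$.

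The main obstacle is the equality case. The inequality itself is a transparent H\"older plus Gauss--Bonnet argument applied on each level, but promoting the analytic identity $H(\nabla u)\equiv c(u)$ to the rigid geometric conclusion that $\partial\Omega=\partial\mathcal W_R$ requires combining strict concavity, the $C^2$ hypothesis on $E_u$, and the geometric description of $\{\Omega_t\}$ as a Wulff-parallel family to exclude any non-Wulff starting domain.
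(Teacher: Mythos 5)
The inequality part of your argument is correct and is essentially the paper's proof: rewrite the anisotropic Hessian integral via the coarea formula, bound the level-set integrand from below using H\"older's inequality with the measure $(k_H/|\nabla u|)\,d\mathcal H^1$ and the anisotropic Gauss--Bonnet identity \eqref{gaussbonnet}, then change variables $s=\lambda_H(t)$ and match with \eqref{hesintrad}. No issues there.

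The equality case is where your proof has a genuine gap. You correctly deduce that H\"older equality forces $H(\nabla u)\equiv c(t)$ on each level set, but the jump from there to ``$\{\Omega_t\}$ is a Wulff-parallel family, and $C^2$ regularity excludes a cut locus, so $\partial\Omega$ is a homothet of $\partial\mathcal W$'' is not a proof --- it is a heuristic that, as stated, is actually \emph{false} without strict concavity. Indeed, Remark \ref{esempio} exhibits a non-Wulff domain $\Omega=\Omega_0+\delta D$ and a $C^2$, concave (but not strictly concave) $u$ with $|\nabla u|$ constant on every level set, so a $C^2$ Wulff-parallel family can certainly emanate from a non-Wulff boundary without any regularity breakdown. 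The decisive ingredient you are not using is strict concavity, and the paper uses it quantitatively, not qualitatively: from $H(\nabla u)=c(t)$ one computes, via \eqref{muprimo}, $\lambda_H(t)=-c(t)\mu'(t)$, and via \eqref{proplambdaeq} with \eqref{gaussbonnet}, $\lambda_H'(t)=-2\kappa/c(t)$; multiplying gives $\lambda_H'\lambda_H=2\kappa\mu'$. Strict concavity guarantees a single maximum point, so both $\lambda_H$ and $\mu$ vanish at $t=M$, and integrating from $t$ to $M$ yields $\lambda_H(t)^2=4\kappa\,\mu(t)$ --- i.e.\ \emph{equality in the anisotropic isoperimetric inequality} for every level set. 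That is what forces each $\Omega_t$ (hence $\Omega$ itself) to be a translate of a Wulff shape, and combined with $H(\nabla u)$ constant on level sets one can then conclude $u=u^\stella$ by the argument of Ferone--Volpicelli (\cite{fvol}). Your proposal never produces the isoperimetric equality, so it cannot rule out non-Wulff domains; you should replace the cut-locus heuristic with this ODE-in-$t$ integration.
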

\begin{proof}
Using \eqref{hesint} and the coarea formula, we get that, for 
$M=\max_{\bar\Omega}u$, 
	\begin{equation}
	\label{pspass}
	I_H[u,\Omega]=\frac{1}{2} \int_{\Omega} k_H(x)\, H^3(\nabla u) \,dx	
	= \frac{1}{2}\int_0^M dt \int_{{u=t}} H^{3}(\nabla u) \frac{k_{H}(x)}{|\nabla u|}d\sigma.
	\end{equation}
	Now observe that, by the H\"older inequality
	\begin{equation}
	\label{hold}
	\int_{u=t} H(\nabla u) \frac{k_{H}(x)}{|\nabla u|}d\sigma \le
	\left(\int_{u=t} H^{3}(\nabla u) \frac{k_{H}(x)}{|\nabla u|}d\sigma\right)^{\frac 1 3}
	\left(\int_{u=t}  \frac{k_{H}(x)}{|\nabla u|}d\sigma\right)^{\frac 2 3},
	\end{equation}
	then using the homogeneity of $H$ and formulas \eqref{gaussbonnet} and \eqref{proplambdaeq}, we have
	\[ 
	\int_{u=t}H^{3}(\nabla u)  \frac{k_{H}(x)}{|\nabla u|}d\sigma \ge \dfrac{\ds\left(\int_{u=t} k_{H}(x)H(\nu)d\sigma\right)^{3} }
	{\ds\left(\int_{u=t}  \frac{k_{H}(x)}{|\nabla u|} d\sigma\right)^{2}} = \frac{8\kappa^{3}}{(-\lambda'_{H}(t))^{2}} = 8\kappa^{3} [(-u^{\aste})'(\lambda_{H}(t))]^{2}.
	\]
	Hence applying the above inequality in \eqref{pspass}, performing the change of variable $s=\lambda_{H}(t)$ and recalling \eqref{hesintrad} we get \eqref{pzeq}. 
	
	Now suppose that $u$ is strictly concave in $\Omega$, and that equality in \eqref{pzeq}  holds. Then \eqref{hold} becomes an equality, hence
	\begin{equation*}
	\left.H(\nabla u)\right|_{\{u=t\}} = c(t), \quad t\in [0,{\max}_{\bar \Omega}u[. 
	\end{equation*}
	this implies, recalling \eqref{muprimo}, that
	\[
	\lambda_{H}(t)=\int_{\{u=t\}} \frac{H(\nabla u)}{|\nabla u|} d\sigma =-c(t)\mu'(t), \quad t\in[0,{\max}_{\bar \Omega}u[,
	\]
	and, by \eqref{proplambdaeq} and \eqref{gaussbonnet}, that
	\[
	\lambda_{H}'(t)=-\frac{2\kappa}{c(t)}, \quad t\in[0,{\max}_{\bar \Omega}u[.
	\]
	Hence from the two equalities above we have
	\[
	\lambda_{H}'(t)\lambda(t)= 2\kappa\,\mu'(t), \quad t\in[0,{\max}_{\bar \Omega}u[.
	\]
	Integrating, and recalling that $u$ is strictly concave in $\Omega$, there is a unique point where the function $u$ achieves its maximum, then we can integrate the above equality, obtaining that
	\[
	\lambda_{H}(t)^{2}=4\kappa\, \mu(t).
	\]
	Hence, equality occurs in the anisotropic isoperimetric inequality for all the level sets of $u$. Then, for any $t\in [0,{\max}_{\bar \Omega}u]$, the set $\{u>t\}$ is, up to a translation, homothetic to the Wulff shape. In particular, $\Omega=\Omega^{\stella}$. Together with the fact that $H(\nabla u)$ is constant on $\{u=t\}$, it is possible to proceed as in \cite{fvol}, obtaining that all the level sets have the same center and, up to a translation, $u=u^{\stella}$. 
\end{proof}
\begin{rem}
\label{esempio}
We observe that if we do not assume that $u$ is strictly concave, the equality sign can occurs in the inequality \eqref{pzeq} also if $u$ is not radial and $\Omega$ is not a Wulff shape. For the sake of simplicity, we give an example in the Euclidean case, with $H(x)=|x|$. Let us consider a strictly convex, bounded open set $\Omega_{0}$ with $C^{2}$ boundary, and let $\Omega$ be the set $\Omega_{0} + \delta D$, where $D$ is the unit disk of $\R^{2}$ centered at the origin, and $\delta>0$. Let us consider the function
\[
u(x)=\delta^{3}-d(x)^{3}, \quad x\in \Omega,
\] 
where $d(x)=\dist (x,\Omega_{0})=\inf_{z\in \Omega_{0}} |x-z|$, with $x\in \R^{2}$ (see Figure \ref{fig}). Then the convexity of $\Omega_{0}$ implies that $d$ is a convex function. Moreover, the smoothness of the boundary of $\Omega_{0}$ guarantees that $d$ is $C^2(\R^{2}\setminus \Omega_{0})$. Finally, $|\nabla d|=1$ in $\R^{2} \setminus \bar \Omega_{0}$ (for the properties of the distance function we refer the reader, for example, to \cite{gt} and \cite{rock}). 
Hence $u\in \Phi_{0}(\Omega)\cap C^2(\Omega)$, $|\nabla u|$ is constant on every level set of $u$. Hence, being $k_{H}(x)$ positive on every level set of $u$, the inequality \eqref{hold} becomes an equality. Then also in \eqref{pzeq} the equality sign holds, even if $u$ is not radially simmetric and $\Omega$ is not a ball. 

\begin{figure}[h]
\label{fig}
\includegraphics[scale=.6]{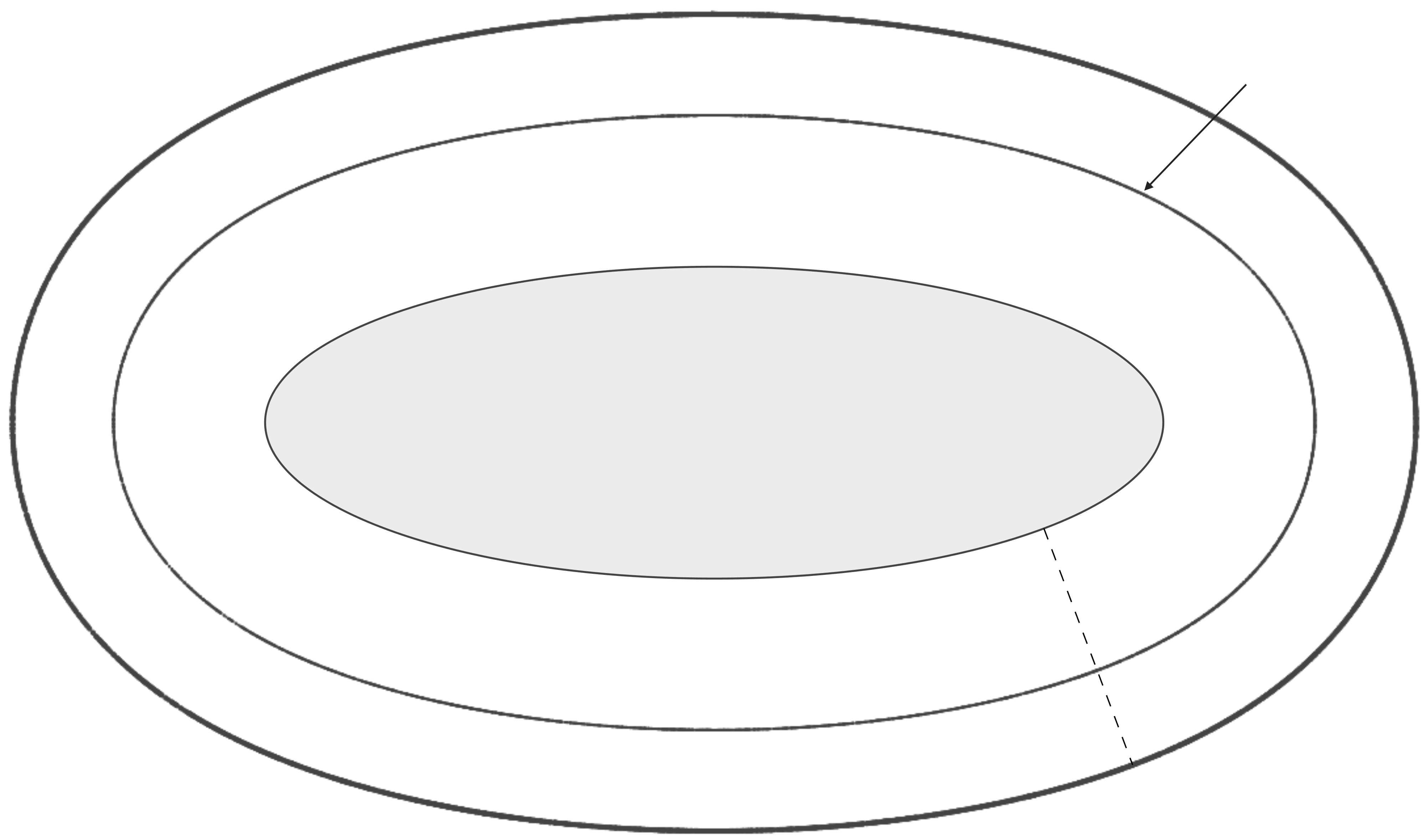}
  \put(-9cm,0.4cm){\footnotesize $\Omega$}
  \put(-2.4cm,1.5cm){\footnotesize $\delta$}
  \put(-5cm,2.2cm){\footnotesize $\Omega_{0}$}
    \put(-7cm,3.2cm){\footnotesize $u=\delta^{3}$}
   \put(-1.3cm,5.4cm){\footnotesize $|\nabla u|\big|_{u=t}=c(t)$} 
\caption{An example of the set $\Omega=\Omega_{0}+\delta D$ of Remark \ref{esempio}. In $\Omega_{0}$ the function $u$ is constant, while on the curve $\{u=t\}$, $0\le t\le\delta^{3}$, we have $|\nabla u|=c(t)$.}
\end{figure}
\end{rem}

\section{Comparison results}
In this section we  use the symmetrization  with respect to the anisotropic perimeter  to prove comparison results between the solutions of suitable fully nonlinear elliptic equations. Let us consider the following problem involving the anisotropic Monge-Amp\`ere operator:
\begin{equation}
\label{pb1det}
\left\{
\begin{array}{ll}
\det_H[u]=f(x) &\text{in } \Omega \\
u=0 &\text{on } \de\Omega,
\end{array}
\right.
\end{equation}
where $\Omega$ is a convex, bounded, smooth open set of $\R^2$ and $f>0$ belongs to $C^{0,\alpha}(\Omega)$.

We will consider strong solutions of problem \eqref{pb1det}, namely functions $u \in \Phi_0(\Omega)$ which satisfy
\[
{\det}_{H}[u]=f(x) \text{ a.e. in }\Omega.
\]
We stress that the positivity of $f$ ensures that the function  $u $ has not flat zones, and, by the concavity of $u$, $\nabla u=0$ only where $u$ attains its maximum.
\begin{rem}
\label{reg}
In the open set $E_{u}$, defined in \eqref{Eu}, the operator ${\det}_{H}[\,\cdot\,]$ is continuous. Then a strong solution $u\in \Phi_{0}(\Omega)$ is a viscosity solution in $E_{u}$ (see \cite[Corollary 3]{libon}). Moreover, if $x\in E_{u}$, 
then the equation in \eqref{pb1det} can be written as
\[
\det\left[ \nabla^{2}u(x)\right]= \dfrac{f(x)}{\det \left[\nabla^{2}_{\xi} F(\nabla u(x))\right]}.
\]
Hence, by the well-known regularity results for fully nonlinear elliptic equations (see \cite{caca}), being $f\in C^{0,\alpha}(\Omega)$, and $f>0$, then $u\in C^{2,\alpha}(E_{u})$.
\end{rem}

The following comparison result holds.
\begin{theo}
Let $\Omega$ be a convex, bounded, open set in $\R^{2}$ with $C^{2}$ boundary, and let $u\in \Phi_0(\Omega)$ be a strong solution of problem \eqref{pb1det}. Consider the unique anisotropic radially symmetric strong solution $v$ of the symmetrized problem
\begin{equation}
\label{pb2det}
\left\{
\begin{array}{ll}
\det_H[v]=f^{\conv}(x) &\text{in } \Omega^{\stella} \\
v=0 &\text{on } \de\Omega^{\stella}.
\end{array}
\right.
\end{equation}
Then
\[
u^{\stella} \le v \text{ in }\Omega^{\stella}.
\]
\end{theo}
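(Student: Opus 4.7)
The plan is to compare the two solutions through their anisotropic--perimeter rearrangements $u^{\aste}$ and $v^{\aste}$, and show $u^{\aste}(s)\le v^{\aste}(s)$ on $[0,P_H(\Omega)]$. Since $v$ is anisotropic radial, $v(x)=v^{\aste}(2\kappa H^o(x))$, so this pointwise inequality transfers directly to $u^{\stella}(x)\le v(x)$ on $\Omega^{\stella}$. The strategy mirrors the classical Talenti argument, but uses the anisotropic perimeter (rather than Lebesgue measure) as the parameter.

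First I would integrate the equation $\det_H[u]=f$ on the superlevel set $\{u>t\}$ for $t\in[0,\max_{\bar\Omega}u[$. Using the Reilly--type identity \eqref{reilly} this yields
\[
\int_{\{u>t\}} f\,dx \;=\; \frac12 \int_{\{u=t\}} k_H(x)\,\frac{H(\nabla u)^3}{|\nabla u|}\,d\mathcal H^{1}.
\]
Now apply the H\"older inequality exactly as in the proof of the P\'olya--Szeg\H{o} inequality \eqref{hold}, combined with the anisotropic Gauss--Bonnet formula \eqref{gaussbonnet} (for $\int k_H H(\nu)=2\kappa$) and the derivative formula \eqref{proplambdaeq} (for $-\lambda_H'(t)=\int k_H/|\nabla u|$). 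This gives
\[
\int_{\{u>t\}} f\,dx \;\ge\; \frac{4\kappa^{3}}{(\lambda_H'(t))^{2}} \;=\; 4\kappa^{3}\bigl[-(u^{\aste})'(s)\bigr]^{2},
\]
where $s=\lambda_H(t)$ so that $t=u^{\aste}(s)$ and $(u^{\aste})'(s)=1/\lambda_H'(t)$.

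Next I would estimate the left-hand side from above using the Hardy--Littlewood inequality together with the anisotropic isoperimetric inequality \eqref{isop}. Setting $t=u^{\aste}(s)$, we have $P_H(\{u>t\})=s$ and hence $|\{u>t\}|=\mu(t)\le s^{2}/(4\kappa)$, so
\[
\int_{\{u>t\}} f\,dx \;\le\; \int_0^{\mu(t)} f^*(\sigma)\,d\sigma \;\le\; \int_0^{s^{2}/(4\kappa)} f^*(\sigma)\,d\sigma.
\]
On the symmetrized side, $v$ is $H^o$-radial, so \emph{all} the inequalities above are equalities (H\"older equality because $H(\nabla v)$ is constant on level sets of $v$, and the isoperimetric equality because the level sets of $v$ are Wulff shapes). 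Combined with the fact that $f^{\conv}$ is the convex symmetrand and $\{v>v^{\aste}(s)\}$ is the Wulff shape of anisotropic perimeter $s$ hence measure $s^{2}/(4\kappa)$, one obtains
\[
4\kappa^{3}\bigl[-(v^{\aste})'(s)\bigr]^{2} \;=\; \int_{\{v>v^{\aste}(s)\}} f^{\conv}\,dx \;=\; \int_0^{s^{2}/(4\kappa)} f^*(\sigma)\,d\sigma.
\]

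Chaining these gives $[-(u^{\aste})'(s)]^{2}\le[-(v^{\aste})'(s)]^{2}$, i.e.\ $(u^{\aste})'(s)\ge (v^{\aste})'(s)$ a.e.\ on $[0,P_H(\Omega)]$. Since $u^{\aste}$ and $v^{\aste}$ both vanish at $s=P_H(\Omega)=P_H(\Omega^{\stella})$, integrating from $s$ to $P_H(\Omega)$ yields $u^{\aste}(s)\le v^{\aste}(s)$, which is the desired conclusion. The main technical obstacle is justifying the one-sided differentiability (a.e.) and the absolute continuity needed to integrate this ODE comparison; these rely on Proposition~\ref{proplambda} and the bounds on $(u^{\aste})'$ established in the previous section, together with the regularity observations of Remark \ref{reg} ensuring that $u\in C^{2,\alpha}(E_u)$ so that Reilly's identity can be legitimately applied on each superlevel set.
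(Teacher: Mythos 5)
Your proposal is correct and follows essentially the same route as the paper: integrate the equation over $\{u>t\}$, apply the Reilly-type identity \eqref{reilly}, use H\"older together with the anisotropic Gauss--Bonnet formula \eqref{gaussbonnet} and the derivative formula \eqref{proplambdaeq}, then Hardy--Littlewood plus the anisotropic isoperimetric inequality, and finally a change of variable $s=\lambda_H(t)$. The only cosmetic difference is in handling $v$: the paper solves the radial ODE explicitly via \eqref{rad} to get the closed form \eqref{v} for $v^{\aste}$, whereas you obtain the same identity by observing that every inequality in the chain is saturated for the $H^o$-radial solution; both yield $-(v^{\aste})'(s)=\frac{1}{2\kappa^{3/2}}\left(\int_0^{s^2/(4\kappa)}f^*\right)^{1/2}$ and the conclusion follows by integration from $s$ to $P_H(\Omega)$.
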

\begin{proof}
Let $u$ be a strong solution of problem \eqref{pb1det}. As observed in Remark \ref{reg}, $u\in C^{2}(E_{u})$. Hence, its level sets $\Omega_{t}$, $0\le t<\max_{\bar\Omega} u$ are $C^{2}$ and convex. Integrating both sides of the equation in \eqref{pb1det}, using \eqref{reilly}, the H\"older inequality, \eqref{gaussbonnet} and \eqref{proplambdaeq} we get
\begin{multline}
\int_{u>t}f(x)\,dx=\int_{u>t}{\det}_{H}[u]\, dx= \frac 1 2 \int_{u=t} k_{H}(x) \frac{H(\nabla u)^{3}}{|\nabla u|} d\mathcal H^{1}\ge \\ \ge\frac 1 2\dfrac{\ds\left(\int_{u=t} k_{H}(x)H(\nu)d\mathcal H^{1}\right)^{3} }
	{\ds\left(\int_{u=t}  \frac{k_{H}(x)}{|\nabla u|}d\mathcal H^{1}\right)^{2}} = \frac{4\kappa^{3}}{(-\lambda'_{H}(t))^{2}}  =4\kappa^{3} [(-u^{\aste})'(\lambda_{H}(t))]^{2},
\end{multline}
where last equality follows by the Definition \ref{asim} of symmetrization with respect to the anisotropic perimeter. By the well-known Hardy-Littlewood inequality and the anisotropic isoperimetric inequality \eqref{isop} we obtain
\[
[(-u^{\aste})'(\lambda_{H}(t))]^{2} \le \frac{1}{4\kappa^{3}} \int_{0}^{\mu(t)}f^{*}(r)\,dr  \le \frac{1}{4\kappa^{3}} \int_{0}^{\frac{ \lambda^{2}_{H}(t)}{4 \kappa}}f^{*}(r)\,dr.
\]
Here we mean $f^{*}(s)=0$ if $s\ge |\Omega|$. Performing the change of variable $s=\lambda_H(t)$ we get
\begin{equation}
\label{app}
[(-u^{\aste})'(s)]^{2}  \le \frac{1}{4\kappa^{3}} \int_{0}^{\frac{s^{2}}{4 \kappa}}f^{*}(r)\,dr, \quad s \in ]0,P_{H}(\Omega)].
\end{equation}
By \eqref{rad} the unique anisotropic radially symmetric strong solution to \eqref{pb2det}, $v(x)=w(r)$, with $r=H^{o}(x)$ is 
\[
w(r)=\frac{1}{\sqrt{\kappa}}\int_r^{R} \left(\int_{0}^{\kappa r^{2}} f^{*}(t) \,dt\right)^{\frac 1 2}\,dr,
\]
and then
\begin{equation}
\label{v}
v^{{\aste}}(s)=\dfrac{1}{2 \kappa^{{\frac 3 2}}}\int_{s}^{P_{H}(\Omega)}\bigg(\int_{0}^{\frac{\sigma^{2}}{4 \kappa}} f^{*}(t) \,dt\bigg)^{\frac 1 2}d\sigma \quad s \in [0,P_{H}(\Omega)].
\end{equation}
By \eqref{app} and \eqref{v} we get
\[
u^{{\aste}}(s) \le v^{\aste}(s), \quad s \in [0,P_{H}(\Omega)].
\]

 \end{proof}

\thanks{{\bf Acknowledgement.} This work has been partially supported by the FIRB 2013 project ``Geometrical and qualitative aspects of PDE's''}

\end{document}